\documentclass[10pt,a4paper]{article}
\usepackage{amsfonts}
\usepackage{amssymb,amsthm, amsmath,graphicx,mathtools}
\usepackage{bbm}
\usepackage{graphicx}

\usepackage{xcolor}

\usepackage{url}
\usepackage{multirow}

\vfuzz10pt \hfuzz10pt

\newtheorem{theorem}{Theorem}

\newtheorem{proposition}[theorem]{Proposition}
\newtheorem{corollary}[theorem]{Corollary}
\newtheorem{lemma}[theorem]{Lemma}

\theoremstyle{remark}

\newtheorem{example}[theorem]{Example}

\def\CaP{\mathbf{P}}

\def\FraC{\mathcal{C}}
\def\FraF{\mathcal{F}}

\def\FraS{\mathcal{S}}
\def\FraP{\mathcal{P}}

\def\k{\mathbbmss{k}}
\def\N{\mathbb{N}}
\def\R{\mathbb{R}}
\def\Z{\mathbb{Z}}
\def\Q{\mathbb{Q}}
\def\d{\mathrm{d}}

\def\int{\mathrm{int} }

\title{Affine convex body semigroups and Buchsbaum rings}

\author{J. I. Garc\'{\i}a-Garc\'{\i}a\footnote{Departamento de Matem\'aticas, Universidad de C\'adiz,
E-11510 Puerto Real (C\'{a}diz, Spain). E-mail: ignacio.garcia@uca.es. Partially supported by the grant MTM2010-15595 and Junta de Andaluc\'{\i}a group FQM-366.}\\
A. S\'{a}nchez-R.-Navarro \footnote{Departamento Ingenier\'{\i}a Inform\'{a}tica, Universidad de C\'adiz,
E-11406 Jerez de la Frontera (C\'{a}diz, Spain). E-mail: alfredo.sanchez@uca.es. Partially supported by Junta de Andaluc\'{\i}a group FQM-366.}\\
A. Vigneron-Tenorio\footnote{Departamento de Matem\'aticas, Universidad de C\'adiz,
E-11406 Jerez de la Frontera (C\'{a}diz, Spain). E-mail: alberto.vigneron@uca.es. Partially supported by the grant MTM2012-36917-C03-01 and Junta de Andaluc\'{\i}a group FQM-366.}\\
}

\sloppy
\date{}

\begin{document}

\maketitle

\begin{abstract}

In this work, using the concept of convex body semigroup, we present new families of Buchsbaum.
We characterize Buchsbaum circle and convex polygonal semigroups and  we describe algorithmic methods to check such characterizations.

\smallskip
{\small \emph{Keywords:} Affine semigroup,
Buchsbaum ring,
Cohen-Macaulay ring,
convex body semigroup.
}

\smallskip
{\small \emph{MSC-class:} 20M14 (Primary), 20M05, 13H10 (Secondaries).}
\end{abstract}

\section*{Introduction}

Buchsbaum rings were introduced in the last half of the twentieth century and it has been treated from different points of view.
Two good introductions to Buchsbaum rings
are
 \cite{Goto} and \cite{Buchsbaum_rings}.
Given a field $\k$, $r$ indeterminates over it, $t_1,\ldots ,t_r$, and an affine cancellative commutative semigroup $S\subset \N^r$, the semigroup ring $\k[S]$
is defined as
the subring of $\k[t_1,\ldots ,t_r]$ generated by $t^s=t_1^{s_1}\cdots t_r^{s_{r}}$ with $s=(s_1,\ldots ,s_{r})\in S$. The semigroup $S$ is Buchsbaum if its associated semigroup ring $\k[S]$ is a Buchsbaum ring.
There are  many works devoted to the study of Buchsbaum affine semigroup rings (see for example \cite{Bresinsky}, \cite{BrunsGubeladzeTrung}, \cite{RosalesBuchs}, \cite{Kamoi}, \cite{Trung}, \cite{Buchsbaum_rings} and the references therein). A recurrent problem proposed in many of them is to find a criteria, expressed in terms of the affine semigroup $S$, to know if $\k[S]$ is Buchsbaum (see \cite{BrunsGubeladzeTrung}).

In this work we focus our attention on the study of Buchsbaum convex body semigroup rings.
Given $F\subset \R^r_\ge$ a non-empty convex body, we consider the so-called convex body semigroup $\FraF=\bigcup_{i=0}^{\infty} F_i\cap \N^r$ where $F_i=i\cdot F$ with $i\in \N$ (see \cite{ACBS} for further details).
This class of semigroups are useful to obtain examples of different kinds of rings.
For instance, in \cite{convex_CM_Go} the authors use these semigroups to characterize some families of
Cohen-Macaulay and Gorenstein rings families and they give computational methods to get examples.
All these characterizations are based on the easy method to check whether an element belong or not to a convex body semigroup.
In this work,
taking again advantage of this fact,
we characterize some families of Buchsbaum convex body semigroups
(Proposition \ref{circulos_Buchs} and Theorem \ref{theorem_polygon_Buchs}, respectively).
Moreover, we prove that these characterizations can be checked by using basic tools of Linear Algebra and Basic  Geometry and we use these tools
in the construction of Buchsbaum semigroup rings.
Besides, in Corollary \ref{triangulos_C-M} and \ref{rombos_C-M}, we give explicitly families of Buchsbaum semigroups. 
In this work, we also introduce the {\tt Mathematica} package {\tt PolySGTools} (\cite{programa_poligonos}). This package is used to compute the minimal generating set of a convex polygonal semigroup given by a rational polygon and to check
Buchsbaumness of an affine convex polygonal semigroup.

The contents of this paper are organized as follows.
In Section \ref{sec1}, we provide some basic tools and definitions that are used in the rest of the work. In Section \ref{sec2}, Buchsbaum affine circle semigroups are characterized. Finally, Section \ref{sec3} is devoted to the study of
 properties that characterize  Buchsbaum affine convex polygonal semigroups (Theorem \ref{theorem_polygon_Buchs}) and to give explicit families of Buchsbaum affine convex polygonal semigroups.

\section{Preliminaries}\label{sec1}

For any $L$ subset of $\R^r$, denote by $L_{\ge}$ the set $\{(x_1,\dots ,x_r)\in L|x_i\ge 0,\,  i=1,\dots,r \}$. Let $G$ be a non-empty subset of $\R^r_\ge$, denote by $L_{\Q_\geq}(G)$ the cone $\left\{\sum_{i=1}^p q_if_i| p\in\N, q_i\in \Q_{\geq}, f_i\in G \right\}$. Let  $Fr(L_{\Q_\geq}(G))$ be the boundary of $L_{\Q_\geq}(G)$ considering the usual topology of $\R^r$ and define the interior of $G$ as $G\setminus Fr(L_{\Q_\geq}(G))$, denote it by $\int(G)$. We use $\d(P)$ to represent the Euclidean distance from a point $P$ to the origin $O$.

Let $S\subset \N^r$ be the affine semigroup generated by $\{n_1,\ldots ,n_r,n_{r+1},\ldots ,n_{r+m}\}$.
A semigroup $S$ is called simplicial if
$L_{\Q_\geq}(S)=L_{\Q_\geq}(\{n_1,\ldots ,n_r\})$. All semigroups appearing in this work are simplicial, so in the sequel we will assume such property. In the case the semigroup ring $\k[S]$ is a Cohen-Macaulay ring, $S$ is Cohen-Macaulay semigroup.
The set  $L_{\Q_\geq}(S)\cap \N^r$ is an affine semigroup which is denoted by $\FraC$.

Let $\overline{S}$ be the semigroup $\{a\in\N^{r}| a+n_i\in S,\, \forall i=1,\ldots ,r+m\}$. It is straightforward to prove that  $\overline{S}\subset \FraC.$ The following result shows a characterization of Buchsbaum rings in terms of their associated semigroups.

\begin{theorem}\cite[Theorem 5]{RosalesBuchs}\label{RosalesBuchs}
The following conditions are equivalent:
\begin{enumerate}
\item $S$  is Buchsbaum.
\item $\overline{S}$ is Cohen-Macaulay.
\end{enumerate}
\end{theorem}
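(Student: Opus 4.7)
The plan is to combine the algebraic characterizations of the Cohen--Macaulay and Buchsbaum properties with a combinatorial translation in terms of the Apéry structure of $S$ with respect to its extreme rays. Since $S$ is simplicial, the monomials $\theta_i=t^{n_i}$ for $i=1,\ldots,r$ form a homogeneous system of parameters for $\k[S]$. Thus $\k[S]$ is Cohen--Macaulay iff $\theta_1,\ldots,\theta_r$ is a regular sequence, which combinatorially means: for all $s_1,s_2\in S$ and $i,j\in\{1,\ldots,r\}$, the equality $s_1+n_i=s_2+n_j$ forces $s_1-n_j\in S$. The Buchsbaum property is equivalent to $\mathfrak{m}_S\cdot H^k_{\mathfrak{m}_S}(\k[S])=0$ for all $k<r$, which I would translate combinatorially into: every failure of the above Cohen--Macaulay implication is ``resolved'' after translation of $s_1,s_2$ by any generator $n_\ell$, $\ell\in\{1,\ldots,r+m\}$.

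For the direction (1)$\Rightarrow$(2), assume $S$ is Buchsbaum and check the combinatorial Cohen--Macaulay criterion on $\overline{S}$. Take $a,b\in\overline{S}$ and $i,j\in\{1,\ldots,r\}$ with $a+n_i=b+n_j$; the goal is $a-n_j\in\overline{S}$, i.e., $a-n_j+n_\ell\in S$ for every generator $n_\ell$. Since $a,b\in\overline{S}$, already $a+n_\ell,b+n_\ell\in S$ and $(a+n_\ell)+n_i=(b+n_\ell)+n_j$; the Buchsbaum condition on $S$, applied to these translated elements, then yields $(a+n_\ell)-n_j\in S$, which is what was needed.

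For the converse (2)$\Rightarrow$(1), start with a potential Cohen--Macaulay defect of $S$: elements $s_1,s_2\in S$ and indices $i,j\in\{1,\ldots,r\}$ with $s_1+n_i=s_2+n_j$. Fix an arbitrary generator $n_\ell$; the plan is to show $s_1-n_j+n_\ell\in S$. The translates $s_1+n_\ell$ and $s_2+n_\ell$ can, after a further generator shift if necessary, be placed inside $\overline{S}$, and the relation $s_1+n_i=s_2+n_j$ is preserved. Applying the Cohen--Macaulay criterion on $\overline{S}$ produces the required element in $S$, which is precisely the combinatorial Buchsbaum condition for $S$.

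The main obstacle will be pinning down the combinatorial Buchsbaum criterion rigorously: showing that ``$\mathfrak{m}_S\cdot H^k_{\mathfrak{m}_S}(\k[S])=0$'' really is equivalent to multiplication by \emph{every} generator $t^{n_\ell}$ killing the Cohen--Macaulay defects. This step requires a careful $\Z^r$-graded local cohomology computation, for instance via an Ishida-type complex, together with an analysis of how the non-extreme generators $n_{r+1},\ldots,n_{r+m}$ enter the picture --- the extreme rays alone do not suffice since they already lie in the parameter ideal. Once this translation is established, the remainder of the argument reduces to routine bookkeeping with the definition of $\overline{S}$.
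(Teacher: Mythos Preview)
The paper does not supply a proof of this statement: it is quoted verbatim from \cite[Theorem~5]{RosalesBuchs} and used as a black box throughout. There is therefore no in-paper proof to compare your proposal against.

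On the proposal itself: direction (1)$\Rightarrow$(2) is plausible once your combinatorial Buchsbaum criterion is pinned down. In direction (2)$\Rightarrow$(1), however, there is a genuine gap beyond the obstacle you already flagged. From $s_1,s_2\in S\subseteq\overline S$ and $(s_1+n_\ell)+n_i=(s_2+n_\ell)+n_j$, the Cohen--Macaulay criterion for $\overline S$ yields only $s_1+n_\ell-n_j\in\overline S$, not $\in S$, which is what your stated Buchsbaum condition demands. Your phrase ``after a further generator shift if necessary, be placed inside $\overline S$'' does not help: since $S\subseteq\overline S$ already, no shift is needed to land in $\overline S$; the difficulty is descending from $\overline S$ back to $S$. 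So the ``routine bookkeeping'' does not close as written. Resolving this is bound up with the step you correctly identified as the main obstacle --- a precise combinatorial translation of the Buchsbaum property --- and in the original reference this is carried out via the intersection of Ap\'ery sets $\bigcap_{i=1}^r\mathrm{Ap}(S,n_i)$ rather than through a defect-killing formulation; that Ap\'ery description is what makes the passage to $\overline S$ transparent.
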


In \cite[Theorem 9]{RosalesBuchs}, it is given a method to check if a simplicial semigroup is Buchsbaum. To apply such method it is necessary to compute the intersection of the Ap\'{e}ry sets of the generators of the rational cone of $S$ (the elements $n_1,\ldots,n_r$). Such intersection is computed using the method presented in \cite{RosalesCM} that uses some bounds to describe a region where the elements of the Ap\'{e}ry set are; the high value of the bound obtained makes the algorithm impractical in many cases.
Thus, to determine if $S$ is Buchsbaum, it is  necessary to check if the semigroup $\overline{S}$ is Cohen-Macaulay.
In this work we focus on solving algorithmically this problem for some kinds of subsemigroups of $\N^2.$

Given $S\subseteq \N^2$ an affine semigroup, denote by $\tau_1$ and $\tau_2$ the extremal rays of $\FraC=L_{\Q_\geq}(S)\cap \N^2$ with $\tau_1$ the ray with greater slope and by $n_1\in \tau_1$ the element of $S\cap \tau_1$ with less module, similarly define $n_2\in\tau_2$. Note that $\FraC\cap \tau_j=\N^2\cap \tau_j$ (with $j=1,2$) is a subsemigroup of $\N^2$ and that it is generated only by an element.

\begin{corollary}\cite[Corollary 2]{convex_CM_Go}\label{C-M}
Let $S\subseteq \N^2$,
the following conditions are equivalent:
\begin{enumerate}
\item $S$ is Cohen-Macaulay.
\item For all $a\in\FraC\setminus S$, $a+n_1$ or $a+n_2$ does not belong to $S$.
\end{enumerate}
\end{corollary}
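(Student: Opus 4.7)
My plan is to invoke the classical characterization: for a simplicial affine semigroup $S$ with extremal-ray generators $n_1,\ldots,n_d$, $\k[S]$ is Cohen-Macaulay if and only if $t^{n_1},\ldots,t^{n_d}$ form a regular sequence in $\k[S]$. In the present setting $d=2$, $t^{n_1}$ is automatically a non-zero-divisor, so the criterion reduces to requiring that $t^{n_2}$ be a non-zero-divisor on $\k[S]/(t^{n_1})$.

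The next step is to rewrite this regularity condition purely in semigroup terms. Unpacking the definition, $t^{n_2}$ is a non-zero-divisor modulo $(t^{n_1})$ exactly when the implication ``$a+n_2=b+n_1$ with $a,b\in S$ $\Rightarrow$ $a\in n_1+S$'' holds. Setting $c=a-n_1\in\Z^2$, this reads: for every $c\in\Z^2$ with $c+n_1\in S$ and $c+n_2\in S$, one has $c\in S$.

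The third step is a short linear-algebra observation that allows us to restrict the universal quantifier from $\Z^2$ to $\FraC$. Writing $c+n_1=\alpha_1 n_1+\alpha_2 n_2$ and $c+n_2=\beta_1 n_1+\beta_2 n_2$ with $\alpha_i,\beta_i\in\Q_{\geq}$ (possible because $c+n_1,c+n_2\in S\subseteq \FraC$ and $L_{\Q_\geq}(S)=L_{\Q_\geq}(\{n_1,n_2\})$), the linear independence of $n_1,n_2$ forces $\alpha_1=\beta_1+1$ and $\beta_2=\alpha_2+1$. In particular $\alpha_1\geq 1$, so $c=(\alpha_1-1)n_1+\alpha_2 n_2$ lies in $L_{\Q_\geq}(\{n_1,n_2\})\cap\N^2=\FraC$.

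Combining the three steps yields that condition $(1)$ is equivalent to the statement ``for every $c\in\FraC$ with $c+n_1\in S$ and $c+n_2\in S$, one has $c\in S$'', which is precisely the contrapositive of $(2)$. I do not expect any serious obstacle: the translation of the regular-sequence criterion into semigroup arithmetic is standard, and the only point that requires any care is the linear-algebra promotion from $c\in\Z^2$ to $c\in\FraC$, which falls out immediately upon equating the two expressions for $c$.
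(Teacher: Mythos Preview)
The paper does not prove this corollary at all: it is quoted verbatim from \cite{convex_CM_Go} and used as a black box, so there is no in-paper argument to compare against. Your argument is correct and is essentially the standard route to this characterization (as in \cite{RosalesCM} and related work): reduce Cohen--Macaulayness of a simplicial semigroup ring to regularity of the parameter sequence $t^{n_1},t^{n_2}$, translate the non-zero-divisor condition on $\k[S]/(t^{n_1})$ into the monomial statement ``$a+n_2\in n_1+S\Rightarrow a\in n_1+S$'', and then observe that any $c\in\Z^2$ with $c+n_1,c+n_2\in S$ automatically lies in $\FraC$ by the linear-independence computation you gave. Nothing is missing.
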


\begin{lemma}\cite[Lemma 3]{convex_CM_Go}\label{lemma_no_C-M}
Let $S\subseteq \N^2$ be a simplicial affine semigroup such that $\int (\FraC) \setminus \int(S)$ is a non-empty finite set, then $S$ is not Cohen-Macaulay.
\end{lemma}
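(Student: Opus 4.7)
The plan is to invoke Corollary \ref{C-M}: to show $S$ is not Cohen-Macaulay, it suffices to exhibit a single $a \in \FraC \setminus S$ with both $a + n_1 \in S$ and $a + n_2 \in S$. I would produce such an $a$ as a ``maximal'' element of the finite set $A := \int(\FraC) \setminus \int(S)$, which is non-empty by hypothesis.

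A preliminary step is to verify $A \subseteq \FraC \setminus S$. If $a \in A$, then $a \in \FraC$ and $a$ lies off the boundary $Fr(L_{\Q_\geq}(S))$; were $a$ also in $S$, the very definition of interior given in Section \ref{sec1} would force $a \in \int(S)$, contradicting $a \in A$. Next, I would check that $a + n_1, a + n_2 \in \int(\FraC)$ whenever $a \in \int(\FraC)$: writing $a = q_1 n_1 + q_2 n_2$ with $q_1, q_2 > 0$ (possible since $a$ is strictly between the two extremal rays $\tau_1, \tau_2$), adding $n_i$ only increases the corresponding coefficient, keeping the result strictly inside the cone and still in $\N^2$.

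The core of the argument is the selection of the witness. Because $A$ is finite and non-empty, I can choose $a^* \in A$ maximizing the linear functional $\ell(x,y) = x + y$ (any linear functional strictly positive on $n_1, n_2$ would work). Since $n_1, n_2 \in \N^2 \setminus \{0\}$, one has $\ell(a^* + n_i) > \ell(a^*)$ for $i = 1,2$, so neither $a^* + n_1$ nor $a^* + n_2$ can lie in $A$. Combined with $a^* + n_i \in \int(\FraC)$, this forces $a^* + n_i \in \int(S) \subseteq S$. Since $a^* \in A \subseteq \FraC \setminus S$, Corollary \ref{C-M} yields that $S$ is not Cohen-Macaulay.

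There is no real obstacle once Corollary \ref{C-M} is in hand; the only point requiring care is the convention from Section \ref{sec1} that $\int(G)$ denotes $G$ minus the boundary of its rational cone rather than the topological interior of $G$ inside $\R^2$. With that convention kept straight, the entire argument reduces to the extremal-element observation above.
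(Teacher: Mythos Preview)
Your argument is correct. The paper does not actually prove this lemma here---it is quoted from \cite{convex_CM_Go}---but the identical maximal-element trick appears verbatim in the proofs of Proposition~\ref{circulos_Buchs} and Theorem~\ref{theorem_polygon_Buchs}, where one picks $a$ with $\d(a)=\max\{\d(a')\mid a'\in\int(\FraC)\setminus\int(\overline{\FraS})\}$ and observes that $a+n_1',a+n_2'$ are forced back into the semigroup; your choice of the linear functional $\ell(x,y)=x+y$ in place of the Euclidean norm $\d$ is an inessential variant of the same idea.
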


From now on, we consider only semigroups associated to convex bodies.
Let $F\subset \R^r_\ge$ be a non-empty convex body,
 the convex body semigroup generated by $F$ is the semigroup $\FraF=\bigcup_{i=0}^{\infty} F_i\cap \N^r.$ In general, these semigroups are not finitely generated. An interesting property of  them is that it is easy to check if an element $P$ belongs to a given semigroup.
Just proceed as follows:
take $\tau$ the ray defined by $P$ and the segment $\tau\cap F=\overline{AB}$ with $\d(A)\leq \d(B)$; the element $P$ belongs to $\FraF$ if and only if the set $\{k\in\N|\frac{\d(P)}{\d(B)}\leq k \leq \frac{\d(P)}{\d(A)}\}$ is non-empty.

In \cite{ACBS},  affine convex body semigroups are characterized when the initial convex body is a circle or a convex polygon. In both cases, a convex body semigroup is affine if and only if the intersection of the initial convex body with each extremal ray of its associated positive integer cone contains at least a rational point. Besides, the minimal system of generators of these convex body semigroups can be computed algorithmically (see Theorem 14 and Theorem 18 in \cite{ACBS} for further details). Let $F\subset \R^2$ be a convex body, in this case the positive integer cone $\FraC$ is equal to  $L_{\Q_\geq}(F\cap \R ^2_{\geq})\cap \N^2$
and  $\int(\FraC)=\FraC\setminus\{\tau_1,\tau_2\}.$

\section{Buchsbaum affine circle semigroups}\label{sec2}

Let $C\subset\R^2$ be the circle with center  $(a,b)$ and radius $r>0$ with $a,b,r\in\R$; define  $C_i$   the circle with center $(ia,ib)$ and radius $ir$ and $\FraS=\bigcup_{i=0}^\infty C_i\cap \N^2$ the so-called circle semigroup associated to $C$.
Note that
when $C\cap \R^2_\geq$ has at least two points the circle semigroup is simplicial, so
in this section we consider that $\FraS$ is always a simplicial affine circle semigroup.
Let $\overline{\FraS}$ be the semigroup $\{a\in\N^{2}| a+n_i\in \FraS,\, \forall i=1,\ldots ,m\}$ with $\{n_1,n_2,\ldots ,n_{m}\}$ the minimal system of generators of $\FraS.$

\begin{proposition}\label{circulos_Buchs}
Let  $\FraS\subset \N^2$ be an affine circle semigroup. The semigroup $\FraS$ is Buchsbaum if and only if $\int(\FraC)=\int(\overline{\FraS})$ and $\overline{\FraS}\cap \tau_j$ is generated only by one element for $j=1,2$.
\end{proposition}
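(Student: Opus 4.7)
The plan is to combine Theorem~\ref{RosalesBuchs}, which turns ``$\FraS$ Buchsbaum'' into ``$\overline{\FraS}$ Cohen--Macaulay,'' with Corollary~\ref{C-M} applied to $\overline{\FraS}$. Since $\overline{\FraS}\subseteq\FraC$ shares the extremal rays $\tau_1,\tau_2$ with $\FraS$, let $n_j'$ denote the unique minimal nonzero element of $\overline{\FraS}\cap\tau_j$; this plays the role of the ``$n_j$'' in Corollary~\ref{C-M}. A key geometric input specific to circle semigroups is that $\FraC\setminus\FraS$ (and hence $\FraC\setminus\overline{\FraS}$) is finite: on every interior ray $\tau$ the chord $\tau\cap C$ has positive length, and for $i$ large enough the consecutive scaled disks $iC$ and $(i+1)C$ overlap, so their union covers $\FraC$ outside a bounded region. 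This is exactly what lets us invoke Lemma~\ref{lemma_no_C-M}.

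For the necessity of the two conditions, assume $\overline{\FraS}$ is Cohen--Macaulay. If $\int(\FraC)\ne\int(\overline{\FraS})$, the difference is a non-empty finite subset of $\int(\FraC)$, and Lemma~\ref{lemma_no_C-M} contradicts Cohen--Macaulayness; thus $\int(\FraC)=\int(\overline{\FraS})$. Now if the subsemigroup $T_j:=\overline{\FraS}\cap\tau_j$ of $\N^2\cap\tau_j$ were not principal, pick $b\in T_j\setminus\N n_j'$ of minimal Euclidean norm; then $a:=b-n_j'$ lies in $\tau_j\cap\FraC\setminus\overline{\FraS}$ while $a+n_j'=b\in\overline{\FraS}$, and simultaneously $a+n_{3-j}'\in\int(\FraC)=\int(\overline{\FraS})\subseteq\overline{\FraS}$, violating Corollary~\ref{C-M}.

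For sufficiency, take $a\in\FraC\setminus\overline{\FraS}$. The equality $\int(\FraC)=\int(\overline{\FraS})$ forces $a\in\tau_1\cup\tau_2$, say $a=km_1\in\tau_1$, where $m_1$ generates $\N^2\cap\tau_1$. Writing $n_1'=c_0 m_1$, the ray condition gives $\overline{\FraS}\cap\tau_1=\N n_1'$; hence $c_0\nmid k$ implies $c_0\nmid k+c_0$, i.e., $a+n_1'\notin\overline{\FraS}$. Corollary~\ref{C-M} then yields Cohen--Macaulayness of $\overline{\FraS}$, and Theorem~\ref{RosalesBuchs} finishes the argument.

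The one genuinely delicate point is the finiteness of $\FraC\setminus\FraS$ for affine circle semigroups; once that is in hand, everything else is a direct translation of Corollary~\ref{C-M} together with the elementary observation that a subsemigroup $T\subseteq\N m$ is principal iff every element of $T$ is a multiple of its smallest nonzero element.
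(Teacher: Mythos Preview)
Your proof follows the paper's strategy: reduce via Theorem~\ref{RosalesBuchs} to the Cohen--Macaulayness of $\overline{\FraS}$, then apply Corollary~\ref{C-M} using the finiteness of the interior gap set for circle semigroups. One caution: your opening assertion that $\FraC\setminus\FraS$ itself is finite is false in general. When $C\cap\tau_j$ is a single point $P$, the ray $\tau_j$ meets each $iC$ only at $iP$, and $(\FraC\setminus\FraS)\cap\tau_j$ is typically infinite; in the paper's own example $C\cap\tau_1=\{(32/25,24/25)\}$, so $\FraS\cap\tau_1=\langle(32,24)\rangle$ while $\FraC\cap\tau_1=\langle(4,3)\rangle$, and the gaps $\{k(4,3):8\nmid k\}$ are infinite. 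What is actually true, and what your argument in fact uses, is that $\int(\FraC)\setminus\int(\FraS)$ is finite (this is \cite[Lemma~17]{ACBS}, which the paper also invokes); since Lemma~\ref{lemma_no_C-M} only concerns the interior, your logic survives once you restrict the claim accordingly. Your treatment of the ray condition is slightly different from, and arguably cleaner than, the paper's: rather than splitting into cases on whether $\FraS\cap\tau_j$ is principal and then running a max-distance argument on a finite boundary set, you pick a minimal $b\in T_j\setminus\N n_j'$ directly and exploit the already-established equality $\int(\FraC)=\int(\overline{\FraS})$ to place $a+n_{3-j}'$ in $\overline{\FraS}$. This sidesteps any need for finiteness along the boundary rays, which is precisely where your overstated claim would have bitten.
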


\begin{proof}
By Theorem \ref{RosalesBuchs},
 $\FraS$ is Buchsbaum if and only if $\overline{\FraS}$ is Cohen-Macaulay. We prove that $\overline{\FraS}$ is Cohen-Macaulay if and only if $\int(\FraC)=\int(\overline{\FraS})$ and $\overline{\FraS}\cap \tau_j$ is generated by only one element for $j=1,2$.

Assume that $\overline{\FraS}$ is Cohen-Macaulay and suppose that $\int(\FraC)\setminus \int(\overline{\FraS})\neq \emptyset$.
Let $n_j'$ be one element of the minimal system of generators of $\overline{\FraS}\cap \tau_j$ with $j=1,2.$
Since there exists a real number $d>0$ such that $\{a\in\int(\FraC)|\d(a)>d\}\subset \FraS$ (see \cite[Lemma 17]{ACBS}),
the set $\int(\FraC)\setminus\int(\FraS)$ is finite, and thus $\int(\FraC)\setminus\int(\overline{\FraS})$ is finite too.
Take $a\in \int(\FraC)\setminus \int(\overline{\FraS})$ verifying that  $\d(a)=\max \{\d(a')| a'\in \int (\FraC) \setminus \int(\overline{\FraS})\}$. The elements $a+n_1'$ and $a+n_2'$ are in $\overline{\FraS}$ and by Corollary \ref{C-M} the semigroup $\overline{\FraS}$ is not Cohen-Macaulay
which is a contradiction.
Let us prove now that $\overline{\FraS}\cap \tau_j$ is generated by only one element for $j=1,2$. We consider two different cases: ${\FraS}\cap \tau_j$ is generated only by one element or not. If there exist $n_j\in\N^2$ such that ${\FraS}\cap \tau_j=\langle n_j\rangle$ for $j\in\{1,2\}$, then
for every $a\in (\tau_j\setminus \FraS)\cap\N^2$ we have that $a+n_j\in \tau_j\setminus S$ and hence
$\overline{\FraS}\cap \tau_j=\FraS\cap \tau_j.$
We consider now the case that ${\FraS}\cap \tau_j$ is minimally generated by two or more elements with $j\in\{1,2\}$.
We have that $C\cap \tau_j$ is a segment and that $(\FraC\setminus \FraS)\cap \tau_j$ is a finite non-empty set.
This implies that
$(\FraC\setminus \overline{\FraS})\cap \tau_j$ is a finite set too.
If this is non-empty, take the element $a\in (\FraC\setminus \overline{\FraS})\cap \tau_j$ such that $\d(a)=\max \{\d(a')| a'\in (\FraC\setminus \overline{\FraS})\cap \tau_j\}$. It verifies that $a+n_1'$ and $a+n_2'$ belong to $\overline{\FraS}$ and therefore $\overline{\FraS}$ is not Cohen-Macaulay (Corollary \ref{C-M}).

Assume now that $\int(\FraC)=\int(\overline{\FraS})$ and that $\overline{\FraS}\cap \tau_j$ is generated by only one element for $j=1,2$. In this case, it is straightforward to prove that for all $a\in \FraC\setminus \overline{\FraS}$, $a+n_1\not\in\overline{\FraS}$ or $a+n_2\not\in\overline{\FraS}$. By Corollary \ref{C-M}, $\overline{\FraS}$ is Cohen-Macaulay.
\end{proof}

Using the above proof, the conditions of Proposition \ref{circulos_Buchs} can be determined from the initial circle. To check whether $\int(\FraC)=\int(\overline{\FraS})$, we only have to compute the finite set $\int (\FraC) \setminus \int(\overline{\FraS})$ by using the bound provided by \cite[Lemma 17]{ACBS}. The second condition
is satisfied whether $C\cap \tau_j$ is a point or, in the case $C\cap \tau_j$ is a segment, if the generator of $\FraC\cap \tau_j$ belongs to $\overline{\FraS}.$ Both conditions can be checked algorithmically.

\begin{example}
Let $C$ be the circle with center $(7/5,4/5)$ and radius $1/5$. Using the program {\texttt{CircleSG}} (see \cite{programa}), we obtain that the affine circle semigroup\footnote{Note that $C\cap \tau_1=(32/25, 24/25)$ and $C\cap \tau_2=(96/65, 8/13).$}
 $\FraS$
associated to $C$ is minimally generated by the set $$\Big\{(4,2),(5,3),(6,3),(6,4),(7,3),(7,4),(7,5),(8,5),(9,4),(9,6),(10,7),$$
$$(11,8),(15,11),(19,8),(19,14),(23,17),(27,20),(31,13),(31,23),(32,24),$$
$$(35,26),(43,18),(55,23),(67,28),(79,33),(91,38),(96,40)\Big\}$$ and $\int(\FraC)\setminus\int({\FraS})$ is $\{(2,1),(3,2)\}$ (see Figure \ref{ejemplo_circle}).
\begin{figure}[h]
    \begin{center}
\begin{tabular}{|c|}\hline
\includegraphics[scale=0.4]{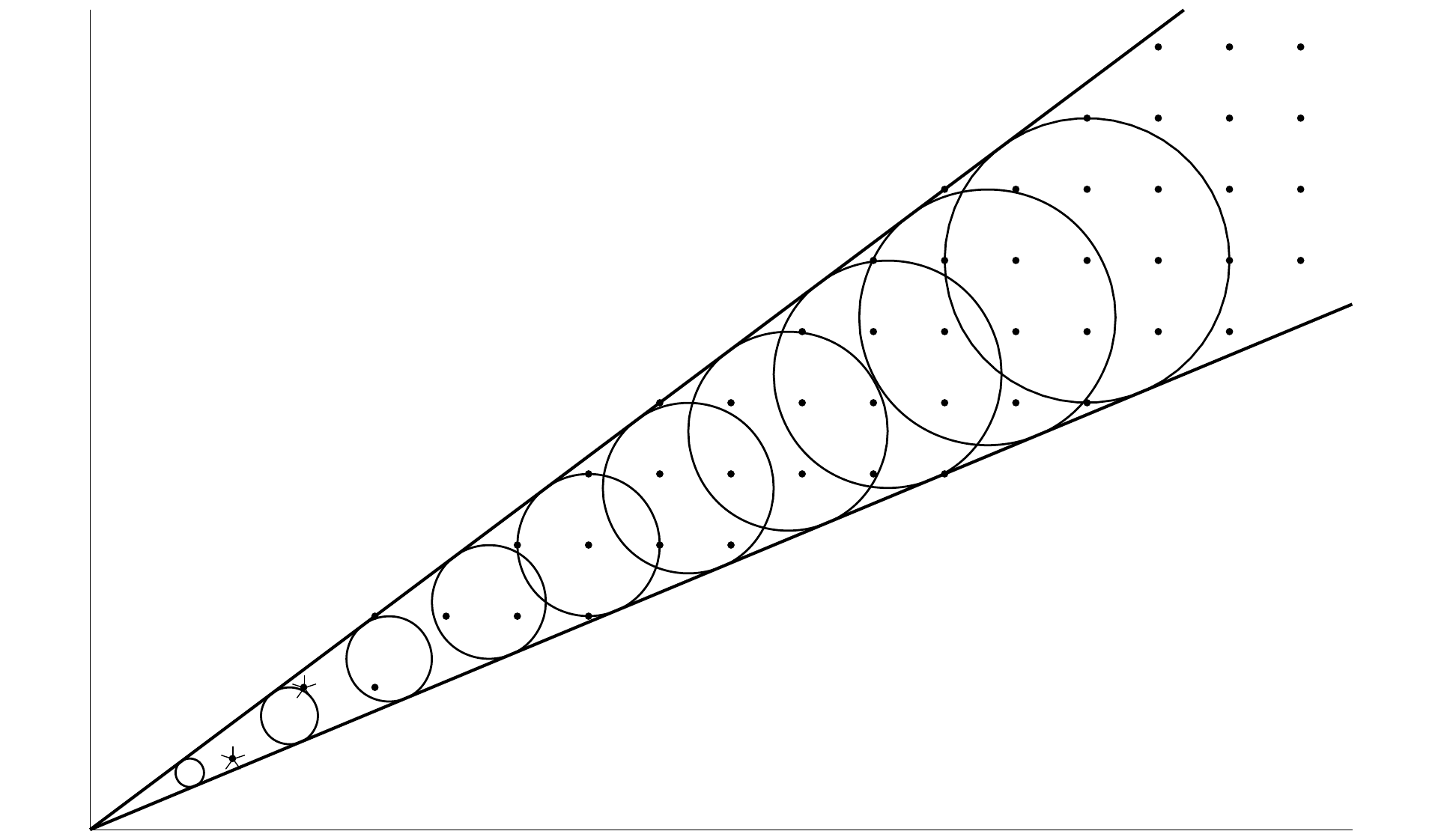} \\ \hline
\end{tabular}
\caption{The affine circle semigroup associated to the circle with center $(7/5,4/5)$ and radius $1/5$.}\label{ejemplo_circle}
\end{center}
\end{figure}
It is easy to check that  the points $(2,1)$ and $(3,2)$ belong to $\overline{\FraS}$. Thus, $\int(\FraC)=\int(\overline{\FraS}).$ Besides, $\overline{\FraS}\cap \tau_1=\langle (32,24) \rangle$ and $\overline{\FraS}\cap \tau_2=\langle (96,40) \rangle$.
By Proposition \ref{circulos_Buchs}, the affine circle semigroup $\FraS$ is Buchsbaum.
\end{example}

For any affine semigroup, a problem of a high computational complexity is the problem of determining whether an element belongs to it.
As explained above, in the particular case of circle semigroups this problem is simple.
This fact simplify
the computation of the above example and allows to obtain the result very quickly.

\section{Buschbaum affine convex polygonal semigroups}\label{sec3}

Denote by  $F\subset \R^2_{\geq}$ a  compact convex polygon (not equal to a segment) with vertex set $\CaP=\{P_1,\ldots ,P_t\}$ arranged in counterclockwise direction and let $\FraP=\bigcup_{i=0}^{\infty} F_i\cap \N^2$ be its associated semigroup. Note that since the fixed convex polygon $F$ is not a segment, $\FraP$ is a simplicial affine convex polygonal semigroup. As in previous sections,  $\tau_1$ and $\tau_2$ are the extremal rays of $\FraC$  assuming $\tau_1$ with a slope greater than the slope of $\tau_2$. Let $\overline{\FraP}$ be the semigroup $\{a\in\N^{2}| a+n_i\in \FraP,\, \forall i=1,\ldots ,m\}$ with $\{n_1,n_2,\ldots ,n_{m}\}$ the minimal system of generators of $\FraP$ and let $n_j'$ be a minimal generator of $\overline{\FraP}\cap \tau_j$ with $j=1,2.$

In order to prove the results of this section, we consider different special subsets of the cone $\FraC$ and some points and lines in $L_{\Q_\geq}(F)$. We distinguish two cases, $F\cap \tau_i$ is a point or a segment.

Assume $F\cap \tau_1=\{P_1\}\subset \CaP$, let $j$ be the least positive integer
such that $j\overline{P_{1}P_{t}} \cap (j+1)\overline{P_{1}P_{2}}$ is not empty.
Since $\overline{P_{1}P_{t}}$ and $\overline{P_{1}P_{2}}$ are not parallel,
there exists a point $\{V_1\}=j\overline{P_{1}P_{t}} \cap (j+1)\overline{P_{1}P_{2}}$
(using \cite[Lemma 11]{ACBS}, $V_1$  can be easily computed).
Denote by $T_1$ the triangle with vertex set $\{ O, P_1, V_1-jP_1 \},$ and by $\stackrel{\circ}{T_1}$ its topological interior. By \cite[Lemma 11]{ACBS}, for every $h\in\N$ with $h\geq j$
the points $h\overline{P_{1}P_{t}} \cap (h+1)\overline{P_{1}P_{2}}$ are in the same straight line, which we denote by $\nu_1$.
Note that
$((\stackrel{\circ}{T_1}\cup (\overline{OP_1}\setminus\{O,P_1\}))+\mu P_1)\cap\FraP=\emptyset$ for all $\mu\in\Z_{\geq}$.
This construction allows us to define the set
$$\mathcal{B}_1=\{ D+\lambda n_1 | D\in \overline{(jP_1)V_1} \text{ and } \lambda\in \Q_{\ge} \}\cap\FraC$$ whose elements are in $\FraP$ or  they are in $\bigcup_{\mu\in \N,\, \mu\ge j}\big( (\stackrel{\circ}{T_1}\cup (\overline{OP_1}\setminus\{O,P_1\}))+\mu P_1\big)$.
The elements of $\mathcal{B}_1$ verify that
if $P\in \mathcal{B}_1\setminus \FraP$ then $P+n_1\not\in \FraP$ and thus $P\notin \overline{\FraP}$; this  implies that  $\FraP\cap \mathcal{B}_1= \overline{\FraP}\cap \mathcal{B}_1$.
Denote by $\Upsilon_1$ the finite set ${\rm ConvexHull} (\{O,jP_1,V_1, \nu_1\cap \tau_2 \}) \cap \N^2.$
Analogously,
if the set $F\cap \tau_2=\{P_1\}\subset \CaP$
(for the sake of simplicity, we  call again this point  $P_1$)
there exists the least integer $j$ such that $j\overline{P_{1}P_{2}} \cap (j+1)\overline{P_{1}P_{t}}$ is equal to $\{V_2\}$. Let  $T_2$ be the triangle with vertex set $\{ O, P_1, V_2-jP_1 \},$ and denote by $\nu_2$ the line containing the points $\{h\overline{P_{1}P_{2}} \cap (h+1)\overline{P_{1}P_{t}}| h\ge j, h\in\N\}$
and by $\mathcal{B}_2$ the set $\{ D+\lambda n_2 | D\in \overline{(jP_1)V_2} \text{ and } \lambda\in \Q_{\ge} \}\cap \FraC$.
All of the properties of these sets are analogous to the properties of the sets defined previously for $\tau_1$.
Denote by $\Upsilon_2$  the finite set ${\rm ConvexHull} (\{O,jP_1,V_2, \nu_2\cap \tau_1 \}) \cap \N^2.$
In the case
 $F\cap \tau_i$ is a segment for some  $i$, we take  $\nu_i=\tau_i$ and $\Upsilon_i=\{O\}$.

We
 define the set $\Upsilon = (Q+\L_{\Q_\geq}(F))\cap \N^2\subset \FraC$ with
 $\{Q\}=\nu_1\cap \nu_2\subset \L_{\Q_\geq}(F)$.
Note that the boundary of the set $\Upsilon$ intersects with two different sides of the polygon  $i_0F$ when $i_0\gg 0$ and therefore the sets  $\Upsilon\setminus \FraP$ and $\Upsilon\setminus \overline{\FraP}$ are finite.
The last set we define is the finite set
$\Upsilon '=\{ a\in (\Upsilon _1\cup \Upsilon _2)\setminus \overline{\FraP}|\, a+n_1',a+n_2'\in \overline{\FraP} \}.$
It is straightforward to prove that the cone $\FraC$ is the union of
$\mathcal{B}_1,$ $\mathcal{B}_2,$ $\Upsilon _1,$ $\Upsilon _2$ and $\Upsilon.$

\begin{theorem}\label{theorem_polygon_Buchs}
Let $\FraP$ be a simplicial affine convex polygonal semigroup. Then
\begin{enumerate}
\item \label{caso1} if $\int (\FraC) = \int(\overline{\FraP})$, the semigroup $\FraP$ is Buchsbaum if and only if $\overline{\FraP}\cap \tau_j$ is generated by only one element for $j=1,2$,
\item \label{caso2} if $\int (\FraC) \neq \int(\overline{\FraP}),$ the semigroup $\FraP$ is Buchsbaum if and only if $\Upsilon'=\emptyset$ and $\Upsilon\subset \overline{\FraP}.$
\end{enumerate}
\end{theorem}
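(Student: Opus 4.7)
The plan is to combine Theorem \ref{RosalesBuchs} with Corollary \ref{C-M}, both applied to $\overline{\FraP}$, rephrasing Buchsbaumness of $\FraP$ as the assertion that for every $a\in\FraC\setminus\overline{\FraP}$, at least one of $a+n_1'$, $a+n_2'$ lies outside $\overline{\FraP}$. I would then split into the two cases of the statement and translate this criterion through the decomposition $\FraC=\mathcal{B}_1\cup\mathcal{B}_2\cup\Upsilon_1\cup\Upsilon_2\cup\Upsilon$.

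For part (\ref{caso1}), the hypothesis $\int(\FraC)=\int(\overline{\FraP})$ confines $\FraC\setminus\overline{\FraP}$ to $\tau_1\cup\tau_2$. For $a\in\tau_1\setminus\overline{\FraP}$ the point $a+n_2'$ already lies in $\int(\FraC)=\int(\overline{\FraP})\subseteq\overline{\FraP}$, so the criterion collapses to the one-dimensional condition $a+n_1'\notin\overline{\FraP}\cap\tau_1$. A direct check on the numerical submonoid $\overline{\FraP}\cap\tau_1$ then shows this condition holds for every such $a$ exactly when $\overline{\FraP}\cap\tau_1=\langle n_1'\rangle$: the reverse implication is immediate, while any second minimal generator $m_1>n_1'$ supplies a witness $a:=m_1-n_1'\in(\tau_1\cap\N^2)\setminus\overline{\FraP}$ with $a+n_1'=m_1\in\overline{\FraP}$ and $a+n_2'\in\overline{\FraP}$, contradicting Cohen-Macaulayness. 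A symmetric argument on $\tau_2$ closes the case.

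For part (\ref{caso2}), I would partition $\FraC\setminus\overline{\FraP}$ through the decomposition. On $\mathcal{B}_1\setminus\overline{\FraP}=\mathcal{B}_1\setminus\FraP$ (using the equality $\mathcal{B}_1\cap\FraP=\mathcal{B}_1\cap\overline{\FraP}$ recorded in the text), each such $a$ lies in $\bigcup_{\mu\geq j}\bigl((\stackrel{\circ}{T_1}\cup(\overline{OP_1}\setminus\{O,P_1\}))+\mu P_1\bigr)$; since $n_1'$ is a positive integer multiple of $P_1$ (in the single-point case $\FraP\cap\tau_1=\overline{\FraP}\cap\tau_1=\langle n_1\rangle$, so $n_1'=n_1$), translation by $n_1'$ shifts the index $\mu$ by an integer, preserves the gap region, and yields $a+n_1'\notin\FraP=\overline{\FraP}\cap\mathcal{B}_1$, making the CM criterion automatic on $\mathcal{B}_1$; the argument is symmetric on $\mathcal{B}_2$. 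For $\Upsilon\setminus\overline{\FraP}$, if this finite set is non-empty I would pick $a$ of maximal Euclidean distance; since $\Upsilon=(Q+L_{\Q_\geq}(F))\cap\N^2$ is stable under translation by $n_1'$ and $n_2'$, both $a+n_j'$ lie in $\Upsilon$, have strictly greater distance, and hence (by maximality) belong to $\overline{\FraP}$, producing a CM violation. So $\Upsilon\subseteq\overline{\FraP}$ is necessary, and under that assumption the piece is harmless. Finally, on $(\Upsilon_1\cup\Upsilon_2)\setminus\overline{\FraP}$ the CM criterion is literally the statement $\Upsilon'=\emptyset$, completing the equivalence.

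The delicate step I expect is justifying the automatic CM condition on $\mathcal{B}_1$ and $\mathcal{B}_2$, since it requires matching the geometric period $P_j$ of the gap regions with the algebraic generator $n_j'$ of $\overline{\FraP}\cap\tau_j$. This identification is clean in the single-point case $F\cap\tau_j=\{P_j\}$, but when $F\cap\tau_j$ is a segment (where the paper sets $\nu_j=\tau_j$ and $\Upsilon_j=\{O\}$) a brief separate check will be needed to confirm that the remainder of $\tau_j$ is absorbed into $\Upsilon$ and that no minimal generator of $\overline{\FraP}\cap\tau_j$ escapes the finite analysis carried out on $\Upsilon_j$ and $\Upsilon$.
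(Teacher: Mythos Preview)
Your proposal is correct and follows essentially the same route as the paper: reduce via Theorem~\ref{RosalesBuchs} to Cohen--Macaulayness of $\overline{\FraP}$, apply Corollary~\ref{C-M}, and analyse $\FraC\setminus\overline{\FraP}$ through the decomposition $\FraC=\mathcal{B}_1\cup\mathcal{B}_2\cup\Upsilon_1\cup\Upsilon_2\cup\Upsilon$, using the max-distance argument on $\Upsilon$. Your treatment of case~(\ref{caso1}) is in fact more explicit than the paper's (which simply refers back to the proof of Proposition~\ref{circulos_Buchs}), and your identification of the segment case as the point needing separate care matches exactly where the paper branches into its ``$F\cap\tau_j$ is a segment'' subcases; the minor notational slip ``$a+n_1'\notin\FraP=\overline{\FraP}\cap\mathcal{B}_1$'' should read $a+n_1'\in\mathcal{B}_1\setminus\FraP=\mathcal{B}_1\setminus\overline{\FraP}$, but the intended argument is the paper's.
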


\begin{proof}

We prove that $\overline{\FraP}$ is Cohen-Macaulay if and only if the conditions of the theorem are fulfilled.
Due to the similarity with the proof of Proposition \ref{circulos_Buchs}, the case \ref{caso1} is left to the reader.

Assume that $\int (\FraC) \neq \int(\overline{\FraP})$ and that $\overline{\FraP}$ is Cohen-Macaulay. By Corollary \ref{C-M}, the set $\Upsilon '$ has to be empty. If $\Upsilon\not \subset \overline{\FraP}$,  choose $a\in \Upsilon\setminus \overline{\FraP}$ such that $\d(a)=\max \{\d(a')| a'\in \Upsilon\setminus \overline{\FraP}\}$. Then $a+n_1'$ and $a+n_2'$ belong to $\overline{\FraP}$ which implies that  $\overline{\FraP}$ is not Cohen-Macaulay. Thus $\Upsilon\subset \overline{\FraP}.$

Conversely, let $a$ be an element of $\FraC\setminus \overline{\FraP}$ (note that $a\notin \Upsilon\subset \overline{\FraP}$).
We discuss the possibilities we have.
  If $F\cap \tau_1$ is a point and $a$ belongs to the strip bounded by the parallel lines $\tau_1$ and $\nu_1$, we have that $\FraP\cap \tau_1=\overline{\FraP}\cap \tau_1$ and $n_1=n_1'$.
 Besides, the element  $a$ belongs to $\Upsilon _1\setminus \overline{\FraP}$ or it belongs to $\mathcal{B}_1\setminus \overline{\FraP}$. Since $\Upsilon'=\emptyset$, if
  $a\in \Upsilon _1\setminus \overline{\FraP}$,  the element $a+n_1'$ or $a+n_2'$ does not belong to $\overline{\FraP}$  and if $a\in \mathcal{B}_1\setminus \overline{\FraP}$  then $a+n_1'\notin \overline{\FraP}.$ We proceed similarly in the case of $F\cap \tau_2$ is a point and $a$ belongs to the strip bounded by the parallel lines $\tau_2$ and $\nu_2$, obtaining that the element $a+n_1'$ or $a+n_2'$ does not belong to $\overline{\FraP}$. If $F\cap \tau_2$ is a segment, since $\Upsilon \subset \overline{\FraP},$ we have that  $\FraC\setminus \overline{\FraP}\subset\Upsilon_1\cup \mathcal{B}_1$, and thus every $a\in\FraC\setminus \overline{\FraP}$ verifies  $a+n_1'$ or $a+n_2'$ is not in $\overline{\FraP}$.
  Similarly, when $F\cap \tau_1$ is a segment and  $F\cap \tau_2$ is a single point, for every  $a\in \FraC\setminus\overline{\FraP}$ we obtain again that $a+n_1'$ or $a+n_2'$ is not in $\overline{\FraP}$.
  In any of the above cases, every element $a\in\FraC\setminus \overline{\FraP}$  fulfills that at least one element of the set $\{a+n_1',a+n_2'\}$ does not belong to $\overline{\FraP}$, and hence $\overline{\FraP}$ is Cohen-Macaulay (Corollary \ref{C-M}).
  Finally, if $F\cap \tau_1$ and $F\cap \tau_2$ are both segments, then $\FraC=\Upsilon\subset \overline{\FraP}$; this implies $\int (\FraC) = \int(\overline{\FraP})$, which is a contradiction.
\end{proof}

As in the circle semigroup case, to apply the above result it is necessary to
check whether $\int (\FraC)=\int(\overline{\FraP})$.
 The different situations are the following:

\begin{enumerate}
\item If $F\cap \tau_1$ is a segment $\overline{P_1P_t}$ and $F\cap \tau_2$ is a segment $\overline{P_{d-1}P_d}$, the set $\Upsilon$ is equal to the positive integer cone $\FraC$ and the sets $\FraC\setminus \FraP$ and $\FraC\setminus \overline{\FraP}$ are finite. Let $j\in \N$ be the least integer such that $j\overline{P_1P_t}\cap (j+1)\overline{P_1P_t}\neq \emptyset$ and $j\overline{P_{d-1}P_d}\cap (j+1)\overline{P_{d-1}P_d}\neq \emptyset$, and let $T$ be the triangle with vertex set $\{O,jP_1,jP_{d-1}\}.$ Clearly, $T\cap \N^2$ is finite and
    $\int (\FraC)\setminus\int(\overline{\FraP})\subseteq T\cap \N^2$.
    This is illustrated in Figure \ref{caso_2_segmentos}.
\begin{figure}[h]
    \begin{center}
\begin{tabular}{|c|}\hline
\includegraphics[scale=0.42]{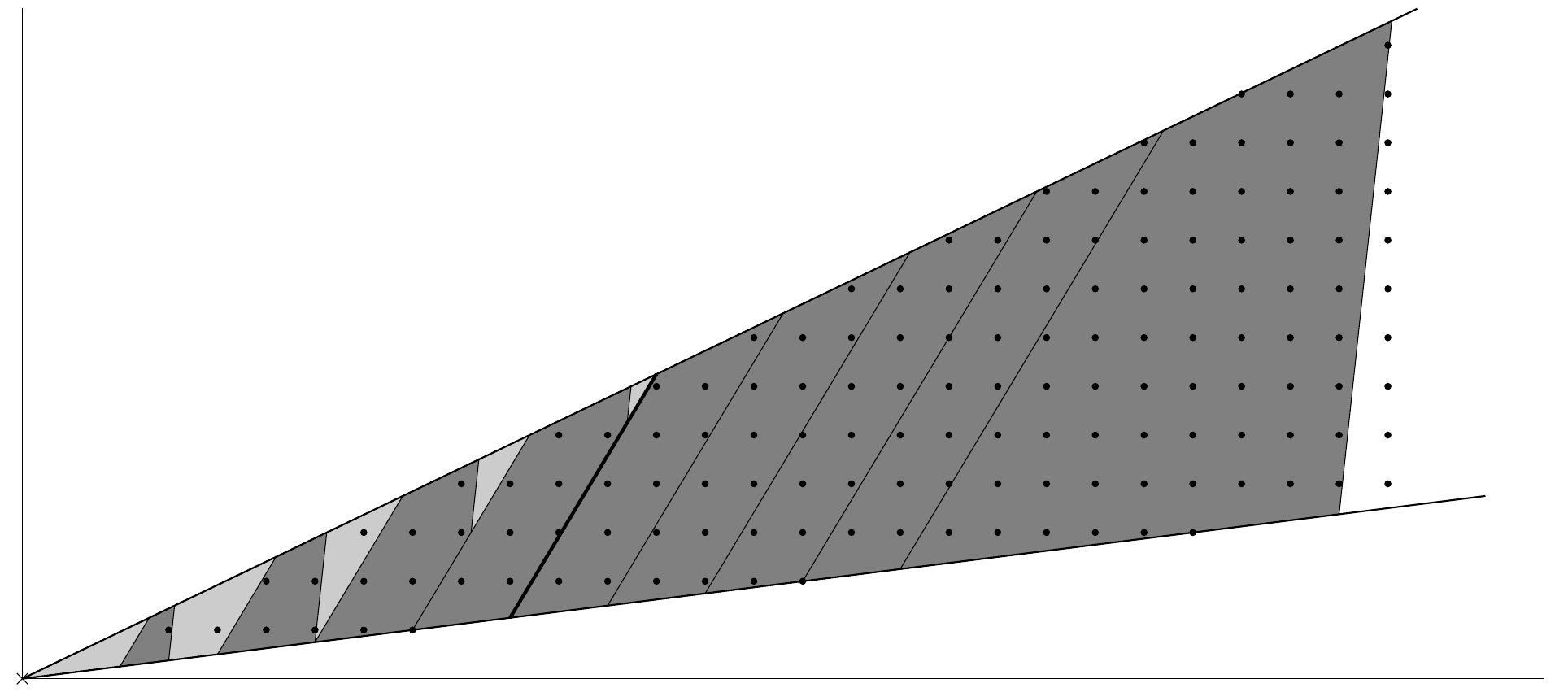} \\ \hline
\end{tabular}
\caption{The affine polygonal semigroup $\FraP$ associated to the polygon $\{(2, 0.25),( 3, 0.375),( 2.6, 1.25),( 3.12, 1.5)\}$. Since $\int(\FraC)\setminus \int(\FraP)=\{(4,1),(7,3)\}$,   $\int (\FraC) = \int(\overline{\FraP}).$ Besides, $\overline{\FraP}\cap \tau_j$ is generated by only one element for $j=1,2$, and therefore $\FraP$ is Buchsbaum.}\label{caso_2_segmentos}
\end{center}
\end{figure}
\item Let us suppose that $F\cap \tau_1$ is a point $P_1$ and $F\cap \tau_2$ is a point $P_d$.
If $P\in (\int (\FraC)\cap (\mathcal{B}_1\cup \mathcal{B}_2))\setminus \int(\FraP)$, the element $P+n_1$ or $P+n_2$ does not belong to $\FraP$ and thus $P\not\in\overline{\FraP}$. This implies that $\FraP\cap (\mathcal{B}_1\cup \mathcal{B}_2)=\overline{\FraP}\cap (\mathcal{B}_1\cup \mathcal{B}_2)$.
Let $j\in\N$ be
such that $j\overline{P_{1}P_{t}} \cap (j+1)\overline{P_{1}P_{2}}=\{V_1\}$ and
let $t\in\N$ satisfying $tP_1=n_1$.
For every  $r,k\in\Z_{\geq}$
 there exists $h\in\{0,\dots,t-1\}$ such that
$(\stackrel{\circ}{T_1}+(r+j) P_1)\cap \N^2
=(\stackrel{\circ}{T_1}+(h+j)P_1)\cap \N^2+kn_1$.
A similar construction must be done for $\mathcal{B}_2$ proceeding  similarly with the triangle $T_2$.
So to compare $\int (\FraC)\cap (\mathcal{B}_1\cup \mathcal{B}_2)$ with $\int(\overline{\FraP})\cap (\mathcal{B}_1\cup \mathcal{B}_2)$ it is only necessary to check if there are nonnegative integer points in the sets $\stackrel{\circ}{T_1}+(h+j)P_1$ (with $h\in\{0,\dots,t-1\}$) and, analogously, in some translations of $\stackrel{\circ}{T_2}$ in the direction of $P_d$. Since $\Upsilon_1$ and $\Upsilon_2$ are included in two parallelograms,  $\Upsilon_1\cup\Upsilon_2$ is a finite set and therefore $(\int (\FraC)\cap (\Upsilon_1\cup\Upsilon_2))\setminus \int (\overline{\FraP})$ can be computed.

 In order to compute  $(\int(\FraC)\cap \Upsilon)\setminus\int(\overline{\FraP})$, just take $j\in \N$ the least integer such that both sets $j\overline{P_1P_t}\cap (j+1)\overline{P_1P_2}=\{V\}$ and $j\overline{P_dP_{d+1}}\cap (j+1)\overline{P_dP_{d-1}}=\{V'\}$ are formed by only one point and let $T$ be the triangle with vertex set $\{Q,V,V'\}$. By construction, the sets $(\int(\FraC)\cap \Upsilon)\setminus T$, $(\int(\FraP)\cap \Upsilon)\setminus T$ and $(\int(\overline{\FraP})\cap \Upsilon)\setminus T$ are equal. Therefore $\int(\FraC)\cap \Upsilon=\int(\overline{\FraP})\cap \Upsilon$ if and only if the finite sets $\int(\FraC)\cap T$ and $\{a\in \int(\FraP)\cap T | a+n_i\in\FraP,\, \forall i=1,\ldots ,m\} $ are equal. This case is illustrate in Example \ref{ejemplo_poligono_buchsbaum} (see Figure \ref{caso_2_puntos}).

\item If $F\cap \tau_1=\{P_1\}$ and $F\cap \tau_2$ is a segment $\overline{P_{d-1}P_d}, $
    to compare the sets $\int(\FraC)\setminus \Upsilon$ and
    $\int(\overline{\FraP})\setminus \Upsilon$, just proceed as in the second case with the sets  $\mathcal{B}_1$ and $\Upsilon_1$.
Let now $j\in \N$  be the least integer such that $j\overline{P_1P_t}\cap (j+1)\overline{P_1P_2}$ is a point $V$ and $j\overline{P_{d-1}P_d}\cap (j+1)\overline{P_{d-1}P_d}\neq \emptyset ,$ and let $T$ be the triangle with vertex set $\{Q,V,jP_{d-1}\}.$ Then $\int(\FraC)\cap \Upsilon=\int(\overline{\FraP})\cap \Upsilon$ if and only if the finite sets $\int(\FraC)\cap T$ and $\int(\overline{\FraP})\cap T$ are equal.

\item Finally, the case $F\cap \tau_2$ is a point and $F\cap \tau_1$ is a segment is analogous to the above case.
\end{enumerate}

In any case, all the necessary sets to compare $\int(\FraC)$ with $\int (\overline{\FraP})$ are finite and they can be obtained algorithmically. Besides, the conditions
 $\Upsilon'=\emptyset$ and $\Upsilon\subset\overline{\FraP}$ can be checked
 algorithmically and
 "$\overline{\FraP}\cap \tau_j$ is generated by only one element" can be tested in a similar way to the case of circle semigroup.

\begin{example}\label{ejemplo_poligono_buchsbaum}
Let $F$ be the polygon determined by the rational points $\{( 3.6, 1.8),( 3.6, 0.6),( 3.3, 1.05),( 4.2, 1.5),( 4.14, 0.99)\}$ and $\FraP$ its associated affine convex polygonal semigroup (the dark grey region in Figure \ref{caso_2_puntos}).
The minimal system of generators of $\FraP$ can be computed
with the function {\tt PolygonalSG} (see \cite{programa_poligonos}),
\begin{verbatim}
In[1]:= PolygonalSG[{{3.6,1.8},{3.6,0.6},{3.3,1.05},
                            {4.2,1.5},{4.14,0.99}}]
Out[1]= {{4,1},{7,2},{7,3},{8,3},{10,3},{11,2},{11,5},{14,3},
            {18,3},{18,9},{20,8},{23,10}}
\end{verbatim}
We obtain that $\FraP$ is minimally generated by
\[\begin{multlined}
G=\{(18,9),(18,3),(4,1),(20,8),(23,10),(8,3),\\(11,5),(11,2),(10,3),(14,3),(7,2),(7,3)\},
\end{multlined}
\]
following the notation of the above sections $n_1=(18,9)$ and $n_2=(18,3)$.

Using basic tools of Linear Algebra we compute the sets $\Upsilon_1$, $\Upsilon_2$, the triangle $T$ and the necessary translations of $T_1$ and $T_2$ (the above sets are needed to check the conditions of Theorem \ref{theorem_polygon_Buchs}). Those translations are the lighter grey triangles in Figure \ref{caso_2_puntos}, $(\Upsilon_1\cup \Upsilon_2)\setminus\FraP$ is the region in middle light grey and $(\int(\FraC)\setminus \int(\FraP))\cap\Upsilon=(T\cap \N^2)\setminus \FraP=\{(13,4)\}$.
Since $(13,4)$ does not belong to $\FraP$, by Corollary \ref{C-M},
the semigroup $\FraP$ is not Cohen-Macaulay.
We also have $(13,4)+n\in\FraP$ for all $n\in G$. This can be checked with the function {\tt BelongToSG} of \cite{programa_poligonos}. For $n_1=(18,9)$, we obtain
\begin{verbatim}
In[2]:= BelongToSG[{13,4}+{18,9},{{3.6,1.8},{3.6,0.6},
                            {3.3,1.05},{4.2,1.5},{4.14,0.99}}]
Out[2]= True
\end{verbatim}
Thus $(13,4)\in \overline{\FraP}$ and therefore $\Upsilon\subset \overline{\FraP}.$

The set $(\int(\FraC)\setminus \int(\FraP))\cap (\Upsilon_1\cup \Upsilon_2)$ is equal to
\[\begin{multlined}
D=\{(3,1),(5,1),(5,2),(6,2), (9,2), (10,2),(9,3),(13,3),\\(16,3),(17,3),(9,4),(10,4), (17,4),(12,5),(13,5),(13,6)\},
\end{multlined}\]
but none of these points are in $\overline{\FraP}$. Besides, for all $a\in D,$ $a+n_1'$ or $a+n_2'$ does not belong to $\overline{\FraP}.$ Therefore $\Upsilon'$ is the empty set.
By Theorem \ref{theorem_polygon_Buchs},
we conclude that $\FraP$ is a non-Cohen-Macaulay Buchsbaum affine semigroup.

Using the function {\tt PSGIsBuchsbaumQ} of \cite{programa_poligonos}, the above Buchsbaumness problem can be solved in less than a second (all the examples of this work have been done in an Intel Core i7 with 16 GB of main memory),
\begin{verbatim}
In[3]:= AbsoluteTiming[PSGIsBuchsbaumQ[{{3.6,1.8},{3.6,0.6},
                            {3.3,1.05},{4.2,1.5},{4.14,0.99}}]]
Out[3]= {0.734412, True}
\end{verbatim}
\begin{figure}[h]
    \begin{center}
\begin{tabular}{|c|}\hline
\includegraphics[scale=0.45]{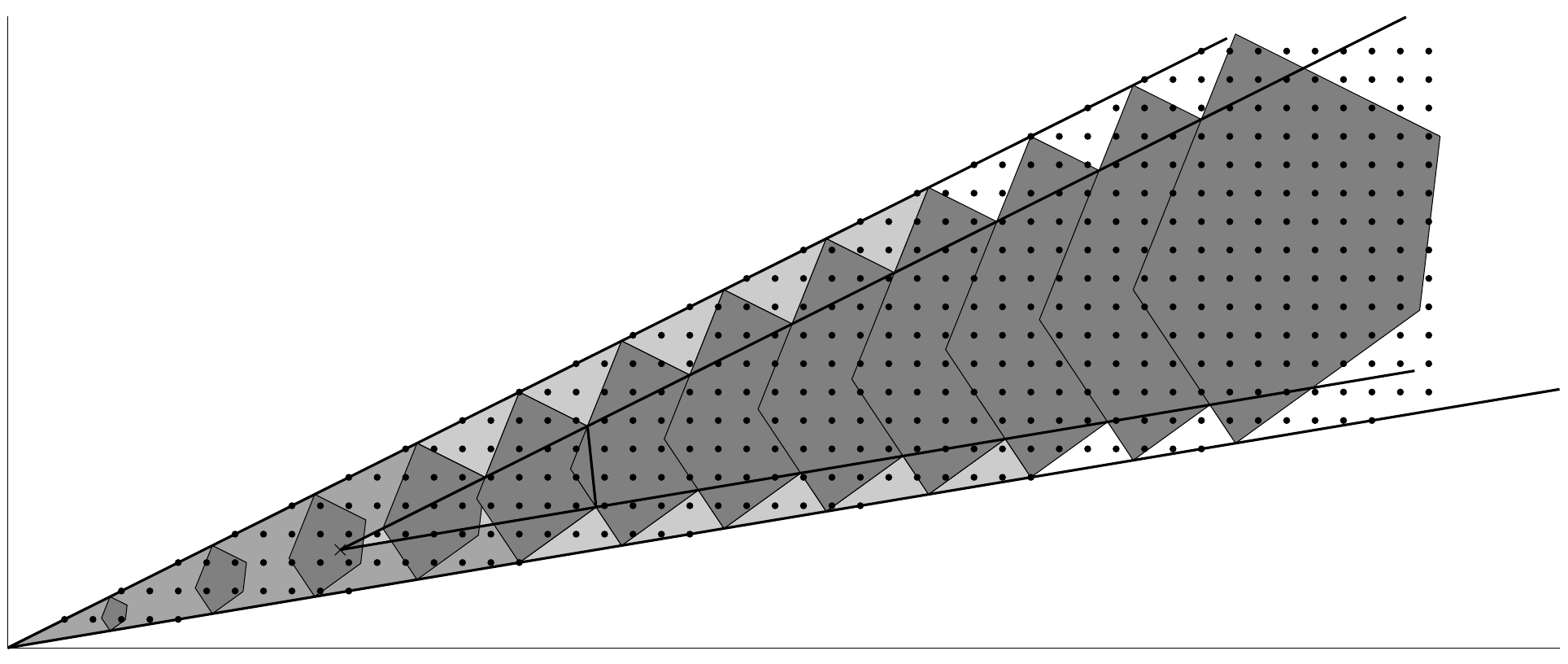} \\ \hline
\end{tabular}
\caption{The affine polygonal semigroup $\FraP$ associated to the polygon $\{( 3.6, 1.8),( 3.6, 0.6),( 3.3, 1.05),( 4.2, 1.5),( 4.14, 0.99)\}$.}\label{caso_2_puntos}
\end{center}
\end{figure}
The return value is \verb={0.734412, True}= where \verb=0.734412= are the seconds required for this computation and \verb=True= is the answer to the Buchsbaumness question.

If we use the method of Theorem 9 in \cite{RosalesBuchs}, it is necessary to compute the intersection of the Ap\'{e}ry set of $n_1$ and the Ap\'{e}ry set of $n_2$
by checking if $2 \times 7\, 771\, 556\, 800\, 000$ elements belong to $\FraP$; this causes such method to be inefficient.

\end{example}

As indicated before, the problem of determining whether an element belongs to a convex polygonal semigroup is straightforward; this implies a reduction of the time of computation.

In  Example \ref{ejemplo_poligono_buchsbaum}, it is used only Elementary Algebra, but Buchsbaum semigroups can be generated using an even simpler approach. The following results provide two user-friendly properties which allow us to obtain easily Buchsbaum rings.

\begin{corollary}\label{triangulos_C-M}
Every affine convex polygonal semigroup associated to a triangle with rational vertices is Buchsbaum.
\end{corollary}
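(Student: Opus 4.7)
The plan is to apply Theorem~\ref{theorem_polygon_Buchs}, verifying its hypothesis case-by-case for an arbitrary triangle $F$ with rational vertices. First I would dispose of the degenerate situation in which $O$ is a vertex of $F$: then both $F\cap\tau_j$ are segments along the extremal rays, the dilations $iF$ exhaust the whole cone, and $\FraP=\FraC$ is trivially Cohen--Macaulay, hence Buchsbaum. So from now on I may assume $O\notin\CaP$; then at most one of $F\cap\tau_1,F\cap\tau_2$ can be a segment, because the two edges meeting at $O$ would otherwise force it to be a vertex of $F$.

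Depending on whether $\int(\FraC)=\int(\overline{\FraP})$ or not, I would invoke case \ref{caso1} or case \ref{caso2} of the theorem. For case \ref{caso1} the task is to show $\overline{\FraP}\cap\tau_j$ is generated by a single element. When $F\cap\tau_j=\{P_j\}$ is a vertex, I would write $P_j=(p_j/q_j)\,n_j$ in lowest terms; then $iP_j$ is a lattice point precisely when $q_j\mid i$, so $\FraP\cap\tau_j=\langle p_j\,n_j\rangle$, and a lattice point $k\,n_j$ with $p_j\nmid k$ cannot lie in $\overline{\FraP}$ either, since adding any generator off $\tau_j$ produces a point off $\tau_j$ which fails to belong to $\FraP$. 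Hence $\overline{\FraP}\cap\tau_j=\langle p_j\,n_j\rangle$. When $F\cap\tau_j$ is a segment, the case \ref{caso1} hypothesis combined with the fact that all sufficiently large lattice multiples of $n_j$ lie in $\FraP$ gives the generator condition by a parallel argument.

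For case \ref{caso2} I would verify $\Upsilon\subseteq\overline{\FraP}$ and $\Upsilon'=\emptyset$. The stronger inclusion $\Upsilon\subseteq\FraP$ holds because $Q=\nu_1\cap\nu_2$ is by construction the common threshold beyond which consecutive dilations $iF$ and $(i+1)F$ overlap in both extremal angular directions, so for any lattice point $P\in Q+L_{\Q_\geq}(F)$ the set of positive reals $i$ with $P\in iF$ is a closed interval of length at least $1$ and therefore contains an integer. For $\Upsilon'=\emptyset$, I would argue that each gap $a\in(\Upsilon_1\cup\Upsilon_2)\setminus\overline{\FraP}$ sits in some $\stackrel{\circ}{T_j}+\mu P_j$ or on $\overline{OP_j}\setminus\{O,P_j\}$, and then trace the images $a+n_1',a+n_2'$: the triangle geometry forces at least one of these to lie in $\mathcal{B}_k\setminus\FraP$ for some $k\in\{1,2\}$, where the identity $\overline{\FraP}\cap\mathcal{B}_k=\FraP\cap\mathcal{B}_k$ built into the construction guarantees the image remains outside $\overline{\FraP}$.

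The main obstacle I anticipate is the asymmetric case analysis when precisely one of $F\cap\tau_1,F\cap\tau_2$ is a segment: there $\Upsilon_j$ degenerates to $\{O\}$ on the segment side and $\mathcal{B}_j$ is pushed up against $\tau_j$, so both the $+n_j'$ propagation argument for $\Upsilon'=\emptyset$ and the single-generator conclusion for $\overline{\FraP}\cap\tau_j$ in case \ref{caso1} need to be re-derived from the segment geometry rather than from the $(p_j,q_j)$ vertex calculation.
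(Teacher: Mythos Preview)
Your route via Theorem~\ref{theorem_polygon_Buchs} is far more laborious than the paper's, which dispatches the corollary in two lines: for a triangle one has $\FraP=\overline{\FraP}$, and by Corollary~12 of \cite{convex_CM_Go} every affine convex polygonal semigroup coming from a rational triangle is already Cohen--Macaulay, whence $\overline{\FraP}$ is Cohen--Macaulay and Theorem~\ref{RosalesBuchs} gives Buchsbaum directly. You never invoke either of these facts; instead you are effectively re-deriving the Cohen--Macaulay property of $\overline{\FraP}$ through the full case split of Theorem~\ref{theorem_polygon_Buchs}, without ever exploiting the simplification $\FraP=\overline{\FraP}$ that makes the triangle case special.

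Beyond the detour, there is a concrete error in your case~\ref{caso1} argument. Writing $e_j$ for the primitive lattice point on $\tau_j$ and $P_j=(p_j/q_j)e_j$, you want to show $k e_j\notin\overline{\FraP}$ when $p_j\nmid k$. Your justification---that adding any generator \emph{off} $\tau_j$ lands outside $\FraP$---is false: for $k$ large the sum $k e_j+n_i$ will typically sit inside some dilate $mF$. The correct witness is the generator \emph{on} $\tau_j$, namely $n_j=p_j e_j$, since $(k+p_j)e_j$ is again not a multiple of $p_j e_j$ and hence not in $\FraP$. Your case~\ref{caso2} verification of $\Upsilon'=\emptyset$ is likewise unsubstantiated: the assertion that every gap $a\in(\Upsilon_1\cup\Upsilon_2)\setminus\overline{\FraP}$ lies in some $\stackrel{\circ}{T_j}+\mu P_j$ ignores that $\Upsilon_1,\Upsilon_2$ are full convex-hull regions near the origin, where for small $i$ consecutive dilates $iF,(i+1)F$ may be disjoint and leave gaps not of that triangular form.
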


\begin{proof}
Note that if $F$ is a triangle, $\FraP$ and $\overline{\FraP}$ are equal. Corollary 12 in \cite{convex_CM_Go} proves that every affine convex polygonal semigroup associated to a triangle with rational vertices is Cohen-Macaulay. Thus,  $\overline{\FraP}=\FraP$ is Cohen-Macaulay and therefore $\FraP$ is Buchsbaum (Theorem \ref{RosalesBuchs}).
\end{proof}

\begin{corollary}\label{rombos_C-M}
Let $F$ be a convex polygon with vertices $P_1,\ldots ,P_4 \in \Q_{\geq} ^2$ and let  $\FraP$ be its associated affine convex polygonal  semigroup. If $P_1\in \FraP\cap \tau_1,$ $P_4\in \FraP\cap \tau_2$ and the points $O,$ $P_2$ and $P_3$ are aligned, $\FraP$ is Buchsbaum.
\end{corollary}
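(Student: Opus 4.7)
My plan is to apply Theorem \ref{RosalesBuchs} and show that $\overline{\FraP}$ is Cohen--Macaulay. The geometric hypothesis supplies a clean triangular decomposition of $F$ that is preserved by dilation.

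First, since $O,P_2,P_3$ are collinear, the chord $\overline{P_2P_3}$ lies on the line $\ell$ through $O$, splitting the quadrilateral into $F=T_1\cup T_2$ with $T_1=\triangle P_1P_2P_3$ and $T_2=\triangle P_4P_2P_3$. Because $\ell$ passes through the origin, the decomposition is preserved under every dilation ($iF=iT_1\cup iT_2$), so $\FraP=\FraT_1\cup\FraT_2$ as subsets of $\N^2$. By Corollary \ref{triangulos_C-M} (and the triangle case of \cite{convex_CM_Go} it relies on), each triangle semigroup $\FraT_k$ is Cohen--Macaulay and satisfies $\FraT_k=\overline{\FraT_k}$. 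Moreover, the cone of $\FraT_1$ has extremal rays $\tau_1$ and $\ell$, while that of $\FraT_2$ has $\tau_2$ and $\ell$; because $P_1\in\FraP$ and $P_4\in\FraP$ lie on $\tau_1,\tau_2$, the minimal generator of $\FraP\cap\tau_j$ coincides with that of $\FraT_j\cap\tau_j$ and equals $n_j'$. A further observation useful later is that $\FraT_1\cap\ell=\FraT_2\cap\ell$, since both triangles share the edge $\overline{P_2P_3}$, so the generators $n_\ell^{T_1}$ and $n_\ell^{T_2}$ agree.

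To check Cohen--Macaulayness of $\overline{\FraP}$ via Corollary \ref{C-M}, I take any $a\in\FraC\setminus\overline{\FraP}$ and argue by the position of $a$ relative to $\ell$. If $a$ lies strictly above $\ell$, it belongs to the cone of $T_1$, and since $\FraT_2$ is contained in the closed half-plane below $\ell$, the identity $\FraP\cap\{\text{strictly above }\ell\}=\FraT_1\cap\{\text{strictly above }\ell\}$ forces $a\notin\FraT_1=\overline{\FraT_1}$. Applying Corollary \ref{C-M} to the Cohen--Macaulay $\FraT_1$ then yields a generator of $\FraT_1$, either $n_1'$ or $n_\ell^{T_1}$, whose sum with $a$ falls outside $\FraT_1$; the sum remains strictly above $\ell$, so it also falls outside $\FraP\subseteq\overline{\FraP}$. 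The case $a$ strictly below $\ell$ is symmetric via $\FraT_2$. The case $a\in\ell$ is handled directly using that $F\cap\ell=\overline{P_2P_3}$ is a segment on a line through $O$, which forces $\FraC\cap\ell$ to be covered by $\FraP$ beyond a finite, easily checked set of exceptions.

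The main obstacle I foresee is the sub-case in which Corollary \ref{C-M} applied to $\FraT_1$ produces the failure at $a+n_\ell^{T_1}$ (the internal generator on the shared ray $\ell$) rather than at $a+n_1'$: here one must promote this internal failure to a failure at $a+n_2'=a+P_4$, which crosses into the cone of $T_2$ on the opposite side of $\ell$. The bridge is to use the parallel Cohen--Macaulayness of $\FraT_2$ at a translate of $a$ on the other side of $\ell$, exploiting that $n_\ell^{T_1}=n_\ell^{T_2}$ so the two triangle arguments glue. It is here that the collinearity hypothesis $O,P_2,P_3$ is essential: it forces the two triangles to share the extremal ray $\ell$, making the symmetric transfer between $\FraT_1$ and $\FraT_2$ possible, and it is what distinguishes the rhombus-type case from the general quadrilateral treated in Theorem \ref{theorem_polygon_Buchs}.
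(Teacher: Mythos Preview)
Your triangular decomposition $F=T_1\cup T_2$ along the line $\ell=OP_2P_3$ is exactly the paper's approach, and the identification $\FraP=\FraP_1\cup\FraP_2$ is correct. The divergence is in how the Cohen--Macaulay condition is verified.

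The paper does \emph{not} invoke Corollary~\ref{C-M} on the sub-triangles. It asserts directly that for every $a\in\FraC_1\setminus\FraP_1$ one has $a+n_1\notin\FraP$ (and symmetrically on $\FraC_2$). The reason is geometric: since $T_1$ has $P_1$ as its vertex on $\tau_1$, a point of $\FraC_1$ lies in $iT_1$ if and only if it lies between the two lines through $iP_1$ parallel to $\overline{P_1P_2}$ and $\overline{P_1P_3}$; adding $P_1=n_1$ shifts both lines to those of $(i+1)T_1$, so membership in $\FraP_1$ is \emph{invariant under translation by $n_1$}. Together with $\FraP\cap\FraC_1=\FraP_1$ this gives $a+n_1\notin\FraP$ outright, and Corollary~\ref{C-M} applies to $\FraP=\overline{\FraP}$ with no case split.

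Your route through Corollary~\ref{C-M} applied to $\FraT_1$ is weaker and creates the very obstacle you flag: you obtain only that $a+n_1'$ or $a+n_\ell^{T_1}$ lies outside $\FraT_1$, but $n_\ell^{T_1}$ is not one of the extremal generators $n_1',n_2'$ of $\overline{\FraP}$, so in the second branch nothing usable has been proved. The proposed ``bridge'' via $\FraT_2$ is not worked out, and I do not see how to make it go: $a+n_2'$ can land on either side of $\ell$, and there is no evident translate of $a$ in $\FraC_2$ to which the $\FraT_2$-argument applies. There is also a smaller slip: from ``the sum falls outside $\FraP$'' together with $\FraP\subseteq\overline{\FraP}$ you cannot conclude the sum is outside $\overline{\FraP}$; for that you need $\FraP=\overline{\FraP}$, which the paper asserts (it follows from the same translation-invariance observation) but which you have only established for the $\FraT_k$, not for $\FraP$. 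The fix is to drop the detour through Corollary~\ref{C-M} on the sub-triangles and argue the translation invariance of the gap region directly, as the paper does.
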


\begin{proof}
Let $\FraC_1$ be the positive integer cone delimited by the ray $\tau_1$ and the line $OP_2$, and let $\FraC_2$ be the cone delimited by the ray $\tau_2$ and the line $OP_2$. Trivially  $\FraC=L_{\Q_\geq}(F)\cap \N^2$ is the union of $\FraC_1$ and $\FraC_2$, and the semigroup $\FraP$ is the union of the affine convex polygonal semigroups, $\FraP_1$ and $\FraP_2$, associated to the triangles with vertex sets $\{P_1,P_2,P_3\}$ and $\{P_2,P_3,P_4\}$, respectively.
With that decomposition of the affine convex polygonal semigroup $\FraP$ and from the hypothesis, we can assert that $\FraP$ is equal to $\overline{\FraP}$, that $\Upsilon\subset \FraP$ and that $\FraP\cap \tau_1$ and $\FraP\cap \tau_2$  are semigroups generated by only one element (Figure \ref{figura_rombo} illustrates this situation).
\begin{figure}[h]
    \begin{center}
\begin{tabular}{|c|}\hline
\includegraphics[scale=0.5]{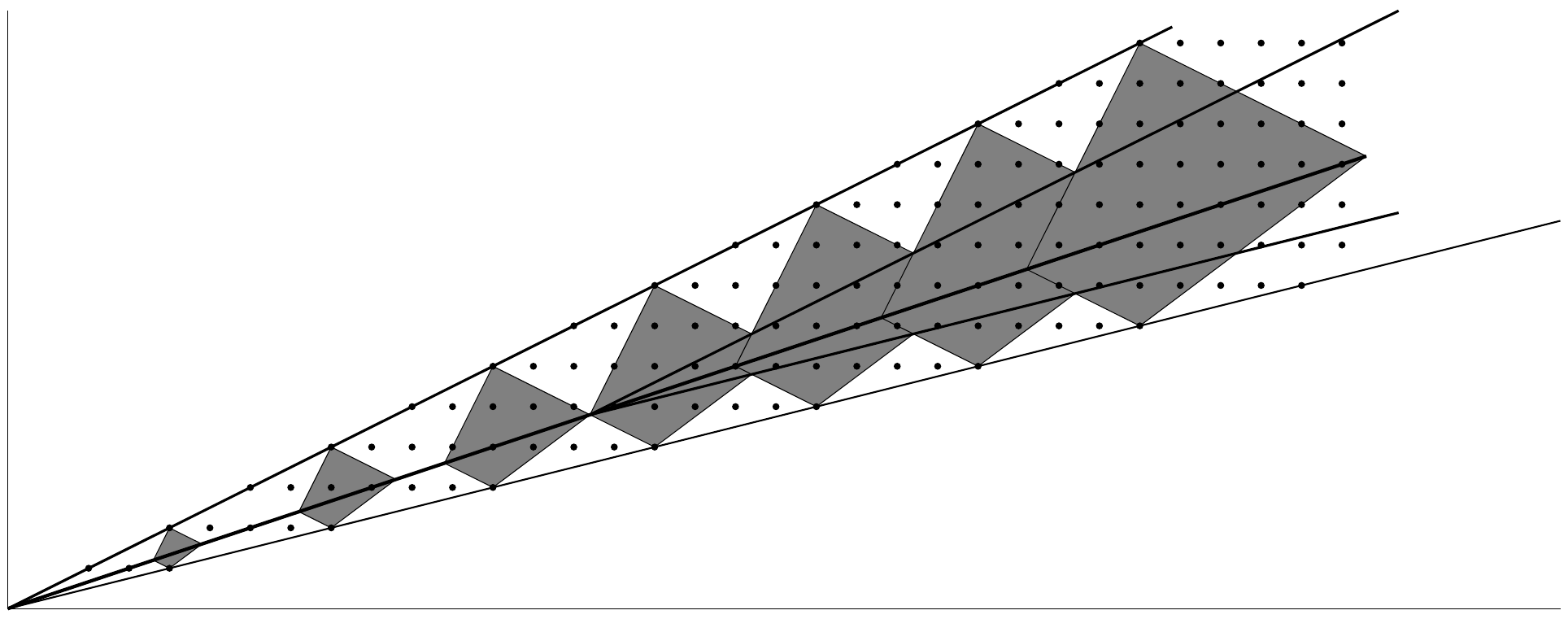} \\ \hline
\end{tabular}
\caption{The affine polygonal semigroup $\FraP$ associated to the polygon $\{(3.6, 1.2), (4.8, 1.6), (4, 2), (4, 1)\}$.}\label{figura_rombo}
\end{center}
\end{figure}
Under such conditions, let $a$ be an element belonging to $\FraC\setminus\FraP.$ Note that if $a\in \FraC_1\setminus \FraP_1$ then $a+n_1\notin \FraP,$ otherwise, if $a\in \FraC_2\setminus \FraP_2$ then $a+n_2\notin \FraP.$ In any case, $a+n_1$ or $a+n_2$ does not belong to $\FraP$. Thus $\overline{\FraP}$ ($=\FraP$) is Cohen-Macaulay (Corollary \ref{C-M}) and then $\FraP$ is Buchsbaum.
\end{proof}

\end{document}